\newcommand{\E}{\mathbb{E}}
\newcommand{\C}{\mathbb{C}}
\newtheorem{theorem}{Theorem}
\newtheorem{lemma}[theorem]{Lemma}
\theoremstyle{remark}
\newtheorem{remark}[theorem]{Remark}
\theoremstyle{definition}
\begin{document}

\title{Comparison of moments of Rademacher chaoses}

\author{Paata Ivanisvili}
\address{Princeton University; University of California, Irvine} \email{paatai@math.princeton.edu \textrm{(P. Ivanisvili)}}

\author{Tomasz Tkocz}
\address{Carnegie Mellon University}
\email{ttkocz@math.cmu.edu \textrm{(T. Tkocz)}}

\begin{abstract}
We show that complex hypercontractivity gives better constants than real hypercontractivity in comparison inequalities for (low) moments of Rademacher chaoses (homogeneous polynomials on the discrete cube).
\end{abstract}

\maketitle

{\footnotesize
\noindent {\em 2010 Mathematics Subject Classification.} Primary 60E15; Secondary 42C10.

\noindent {\em Key words.} Rademacher chaos, moment comparison, hypercontractivity, Hamming cube, Markov--Nikolskii type inequality
}

\section*{Introduction}

A Rademacher chaos $h$ of order (degree) $d$ is a $d$-homogeneous polynomial on the discrete cube $\{-1,1\}^n$ for some $n \geq d$, that is a function of the form $h(x) = \sum_{1 \leq i_1<\ldots<i_d \leq n} a_{i_1,\ldots,i_d}x_{i_1}\cdot\ldots\cdot x_{i_d}$, $x = (x_1,\ldots,x_n) \in \{-1,1\}^n$, for some, say complex coefficients $a_{i_1,\ldots,i_d}$. For $p > 0$, denote by $\|f\|_p$ the $p$-th moment $(\E |f|^p)^{1/p}$ of a function $f:\{-1,1\}^n\to \C$, with the expectation taken against the uniform probability measure on $\{-1,1\}^n$. Let $1 \leq p \leq q$. We are interested in moment comparison inequalities: $\|h\|_{q} \leq C_{p,q,d}\|h\|_{p}$, true for any Rademacher chaos $h$ of degree $d$ with constants $C_{p,q,d}$ dependent only on $p, q$ and $d$ (so independent of $n$ and the coefficients $a_{i_1,\ldots,i_d}$ of~$h$). When $d=1$, these are the Khinchin inequalities and sharp values of the constants $C_{p,q,1}$ are known in many cases (see for instance \cite{NO} for a recent result and further references).

One way of effortlessly obtaining such comparison inequalities is by \emph{real hypercontractivity}, which for $1 \leq p \leq q$ gives $C_{p,q,d} = \left(\frac{q-1}{p-1}\right)^{d/2}$ and
$C_{p,q,d} = e^{(2/p-2/q)d}$, when additionally $q \leq 2$  (see for example Theorem 5.10 in \cite{Jan} and Theorems 9.21, 9.22 in \cite{O'D}). To the best of our knowledge, these are in fact the best known values of constants $C_{p,q,d}$ (except for $p=2$ and $q$ being an even integer, where combinatorial arguments give slightly better results -- see \cite{Bon70} and Exercise 9.38 in \cite{O'D}). The constant $\left(\frac{q-1}{p-1}\right)^{d/2}$ is moreover asymptotically sharp as $d$ goes to infinity with $2 < p < q$ fixed (see \cite{Student}), in the sense that one cannot replace it by  $C^{d/2}$ with $C <\frac{q-1}{p-1}$ as $d\to\infty$. 

The purpose of this note is to further improve the constants for low moments ($p \leq 2$). The key is an observation that complex hypercontractivity yields better comparison between $p$-th and $q$-th moments than real hypercontractivity for $p< 2 < q$, which is the statement of the next theorem.

\begin{theorem}\label{thm:moments-base}
Let $1 < p \leq 2 \leq q.$ Let $h:\{-1,1\}^{n} \to \mathbb{C}$ be a $d$-homogeneous polynomial. We have, 
\begin{align}
&\|h\|_{q} \leq \max\left\{(q-1)^{d/2}, \frac{1}{(p-1)^{d/2}} \right\}\|h\|_{p}. \label{eq:Lp-Lq-base}
\end{align}
\end{theorem}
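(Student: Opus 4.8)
The plan is to derive Theorem~\ref{thm:moments-base} from \emph{complex} hypercontractivity, in its sharp two-point form (Beckner). Recall the noise operator $T_{\rho}$, acting on functions on $\{-1,1\}^{n}$ by $T_{\rho}\bigl(\prod_{i\in S}x_{i}\bigr)=\rho^{|S|}\prod_{i\in S}x_{i}$ for a scalar $\rho\in\C$; on a $d$-homogeneous polynomial $h$ it is simply multiplication by $\rho^{d}$. The input is Beckner's two-point inequality: for $1<r\le 2$, with $r'=r/(r-1)$, and all $a,b\in\C$,
\[
\left(\frac{|a+i\sqrt{r-1}\,b|^{r'}+|a-i\sqrt{r-1}\,b|^{r'}}{2}\right)^{1/r'}\le\left(\frac{|a+b|^{r}+|a-b|^{r}}{2}\right)^{1/r},
\]
equivalently, the one-bit operator $T_{i\sqrt{r-1}}$ is a contraction from $L^{r}(\{-1,1\})$ to $L^{r'}(\{-1,1\})$. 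Since $r'\ge r$, this contractivity tensorizes by Minkowski's integral inequality, exactly as in the usual proof of real hypercontractivity, and yields $\|T_{i\sqrt{r-1}}f\|_{r'}\le\|f\|_{r}$ for every $f:\{-1,1\}^{n}\to\C$. Applying this to a $d$-homogeneous $h$, where $T_{i\sqrt{r-1}}h=(i\sqrt{r-1})^{d}h$, gives
\begin{equation}\label{eq:beckner-hom}
\|h\|_{r'}\le (r-1)^{-d/2}\,\|h\|_{r},\qquad 1<r\le 2.
\end{equation}

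Now fix $1<p\le 2\le q$ (with $q<\infty$) and split into two cases, using only that $t\mapsto\|h\|_{t}$ is non-decreasing. If $(p-1)(q-1)\le 1$, equivalently $q\le p'$, then \eqref{eq:beckner-hom} with $r=p$ gives $\|h\|_{q}\le\|h\|_{p'}\le (p-1)^{-d/2}\|h\|_{p}$; moreover $(p-1)^{-d/2}\ge(q-1)^{d/2}$ in this regime, so the right-hand side is $\max\{(q-1)^{d/2},(p-1)^{-d/2}\}\|h\|_{p}$. If instead $(p-1)(q-1)\ge 1$, equivalently $q'\le p$ (and note $1<q'\le 2$), then \eqref{eq:beckner-hom} with $r=q'$ (so that $r'=q$ and $(r-1)^{-1/2}=(q-1)^{1/2}$) gives $\|h\|_{q}\le (q-1)^{d/2}\|h\|_{q'}\le (q-1)^{d/2}\|h\|_{p}$, and here $(q-1)^{d/2}\ge(p-1)^{-d/2}$; again this is the claimed bound. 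The two cases coincide at $q=p'$, so \eqref{eq:Lp-Lq-base} follows in all cases. Conceptually, the improvement over the real hypercontractive constant $\left(\frac{q-1}{p-1}\right)^{d/2}$ comes because Beckner's inequality transports $L^{p}$ directly to $L^{p'}$ at the cost of a \emph{single} factor $(p-1)^{-d/2}$ (and, dually, $L^{q'}$ to $L^{q}$ at the cost of $(q-1)^{d/2}$), whereas factoring $T_{\rho}$ through $L^{2}$, as real hypercontractivity does, pays the two factors multiplicatively.

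The only substantive ingredient is the complex hypercontractive (Beckner) two-point inequality itself, which I would quote rather than reprove; its proof is a genuine but classical one-variable computation and is, in a sense, the heart of the matter. Everything after it is bookkeeping: the elementary equivalences $(p-1)(q-1)\le 1\Leftrightarrow q\le p'$ and $(p-1)(q-1)\ge 1\Leftrightarrow q'\le p$, the identity $(q'-1)^{-1/2}=(q-1)^{1/2}$, and the tensorization step, for which one must keep the hypothesis in the correct direction $r\le r'$ so that Minkowski's inequality applies. If an $n$-variable statement of complex hypercontractivity on the cube is available in the literature directly in the form $\|T_{i\sqrt{r-1}}\|_{L^{r}\to L^{r'}}\le 1$, I would cite that and omit the tensorization discussion entirely.
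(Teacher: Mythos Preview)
Your proof is correct and follows essentially the same approach as the paper: both derive the inequality from complex hypercontractivity with a purely imaginary parameter, applied to the identity $T_{z}h=z^{d}h$ for $d$-homogeneous $h$. The only cosmetic difference is that the paper invokes Weissler's full $L^{p}\to L^{q}$ characterization $\|T_{it}f\|_{q}\le\|f\|_{p}\iff|t|\le\min\{\sqrt{p-1},1/\sqrt{q-1}\}$ directly, whereas you use only the dual-exponent (Beckner) case $L^{r}\to L^{r'}$ and recover the general $p,q$ statement by the case split $q\le p'$ versus $q'\le p$ together with monotonicity of norms---which is exactly how the sufficiency direction of Weissler's bound unwinds anyway.
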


Our main result is obtained by the usual interpolation of moments, which can be viewed as a self-improvement of \eqref{eq:Lp-Lq-base}. 

\begin{theorem}\label{thm:moments-selfimp}
Let $1 \leq p \leq q.$ Let $h:\{-1,1\}^{n} \to \mathbb{C}$ be a $d$-homogeneous polynomial. We have,
\begin{equation}\label{eq:mainres}
\|h\|_q \leq C_{p,q,d}\|h\|_p,
\end{equation}
with $C_{p,q,d} = \begin{cases} 
\exp\left\{\left(\frac{1}{p}-\frac{1}{q}\right)d\right\}, & \text{ if } 1 \leq p \leq q \leq 2, \\
(q-1)^{\frac{q-p}{p(q-2)}\frac{d}{2}}, & \text{ if } 1 \leq p \leq 2 \leq q \text{ and } \frac{1}{p}+\frac{1}{q} > 1,\\
(q-1)^{\frac{d}{2}}, & \text{ if } 1 \leq p \leq 2 \leq q \text{ and } \frac{1}{p}+\frac{1}{q} \leq 1,\quad \text{asymp. sharp as} \; d \to \infty,\\
\left(\frac{q-1}{p-1}\right)^{\frac{d}{2}}, & \text{ if } 2 < p \leq q, \qquad \qquad \qquad \qquad \quad  \; \text{asymp. sharp as} \; d \to \infty.
\end{cases}$
\end{theorem}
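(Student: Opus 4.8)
The strategy is to derive everything from the base inequality \eqref{eq:Lp-Lq-base} together with the standard interpolation (log-convexity) of $L^p$-norms, i.e.\ $\|h\|_r \le \|h\|_p^{1-\theta}\|h\|_s^\theta$ when $\tfrac1r = \tfrac{1-\theta}{p}+\tfrac{\theta}{s}$. The four cases are handled separately.

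\emph{Case $1\le p\le q\le 2$.} Here I would not use \eqref{eq:Lp-Lq-base} at all, but simply invoke the already-cited real hypercontractive estimate $C_{p,q,d}=e^{(2/p-2/q)d}$ valid for $q\le 2$; wait — the claimed constant is $e^{(1/p-1/q)d}$, which is \emph{better} by a factor $2$ in the exponent, so this case genuinely needs a new argument. The plan is to apply \eqref{eq:Lp-Lq-base} with the exponents $p$ and $q'$ chosen so that $1<p\le 2\le q'$, obtaining a bound $(q'-1)^{d/2}$ (the max is attained by the first term when $q'-1\ge (p-1)^{-1}$, i.e.\ when $\tfrac1p+\tfrac1{q'}\le 1$, so take $q'$ with $\tfrac1{q'}=1-\tfrac1p$), and then interpolate $\|h\|_q$ between $\|h\|_p$ and $\|h\|_{q'}$; optimizing the resulting exponent in the free parameter $q'$ (equivalently optimizing over which auxiliary degree-raising one performs) should collapse to $e^{(1/p-1/q)d}$. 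Concretely, writing $\tfrac1q=\tfrac{1-\theta}p+\tfrac{\theta}{q'}$ gives $\|h\|_q\le \|h\|_p^{1-\theta}\big((q'-1)^{d/2}\|h\|_p\big)^{\theta}=(q'-1)^{\theta d/2}\|h\|_p$, and one sends $q'\to\infty$ while tracking $\theta\to$ the appropriate limit; the bookkeeping with the logarithm of $q'-1$ against $\theta\sim$ (something)$/\log q'$ is where the constant $1/p-1/q$ emerges.

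\emph{Case $1\le p\le 2\le q$, $\tfrac1p+\tfrac1q>1$.} Now $q-1<(p-1)^{-1}$ is \emph{not} guaranteed; in fact $\tfrac1p+\tfrac1q>1$ is exactly the regime where the two terms in the max of \eqref{eq:Lp-Lq-base} compete. The idea is to interpolate $\|h\|_p$ between a low moment and $\|h\|_q$, or equivalently to apply \eqref{eq:Lp-Lq-base} not to the pair $(p,q)$ but to a shifted pair and then interpolate, choosing the shift so that the two terms of the max become equal — this self-tuning is what produces the exponent $\tfrac{q-p}{p(q-2)}\tfrac d2$. Specifically, pick $r\in[p,2]$ with $r-1=(p'-1)^{-1}$ forced by a three-point interpolation $\tfrac1r$ between $\tfrac1p$ and $\tfrac1q$; apply \eqref{eq:Lp-Lq-base} on $(r,q)$ to get $\|h\|_q\le (q-1)^{d/2}\|h\|_r$ (valid once $q-1\ge (r-1)^{-1}$), apply it again or use hypercontractivity on $(p,r)$, and combine. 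The exponent $\tfrac{q-p}{p(q-2)}$ is precisely the $\theta$ (or $1-\theta$) coming out of the interpolation identity $\tfrac1r=\tfrac{1-\theta}p+\tfrac\theta q$ after the optimal choice of $r$, so the computation reduces to solving one linear equation for $\theta$.

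\emph{Cases $\tfrac1p+\tfrac1q\le 1$ and $2<p\le q$.} When $1\le p\le 2\le q$ and $\tfrac1p+\tfrac1q\le 1$ we have $q-1\ge(p-1)^{-1}$, so the max in \eqref{eq:Lp-Lq-base} is just $(q-1)^{d/2}$ and that case is immediate. For $2<p\le q$ the base Theorem does not apply, but we may chain through the value $2$: by \eqref{eq:Lp-Lq-base} (or rather its $p=2$ instance, which is real = complex hypercontractivity) $\|h\|_q\le (q-1)^{d/2}\|h\|_2$, and by the dual/reverse form $\|h\|_2\le (p-1)^{-d/2}\|h\|_p$ — the latter is exactly \eqref{eq:Lp-Lq-base} with the roles reflected, since for $p<2$ it gives $\|h\|_2\le (p-1)^{-d/2}\|h\|_p$ and here $2<p$ forces us instead to use real hypercontractivity $\|h\|_p\le (p-1)^{d/2}\|h\|_2$ rearranged; multiplying yields $\left(\tfrac{q-1}{p-1}\right)^{d/2}$. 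The asymptotic sharpness in the last two cases is cited from \cite{Student} and needs no new argument. \textbf{Main obstacle:} I expect the genuine work to be in Case~2 (the $\tfrac1p+\tfrac1q>1$ regime) — getting the interpolation weights and the auxiliary exponent to land \emph{exactly} on $\tfrac{q-p}{p(q-2)}\tfrac d2$ rather than something merely comparable requires choosing the intermediate exponent optimally and verifying the side condition $q-1\ge(r-1)^{-1}$ holds for that choice; the $q\le 2$ case is a close second because the clean constant $e^{(1/p-1/q)d}$ only appears in the limit $q'\to\infty$, so one must be careful that the limiting bound is actually attained and not just approached.
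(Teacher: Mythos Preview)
Your Cases 3 and 4 are fine (the result in Case~4 is just real hypercontractivity applied directly to the pair $(p,q)$; your ``chain through $2$'' detour is unnecessary and in fact ill-posed, since $\|h\|_p\le (p-1)^{d/2}\|h\|_2$ cannot be rearranged into an upper bound for $\|h\|_2$). Your Case~2 is muddled but salvageable: taking $r=q'=q/(q-1)$, applying \eqref{eq:Lp-Lq-base} to $(r,q)$ to get $\|h\|_q\le(q-1)^{d/2}\|h\|_r$, and then interpolating $\|h\|_r$ between $\|h\|_p$ and $\|h\|_q$ (not ``$\|h\|_p$ between a low moment and $\|h\|_q$'') and solving for $\|h\|_q/\|h\|_p$ does produce the stated exponent --- which is $\tfrac{1}{1-\theta}$, not $\theta$ or $1-\theta$.

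The genuine gap is Case~1. Your plan keeps the lower auxiliary exponent fixed at $p$ and sends $q'\to\infty$. This does not work: with $\theta=\dfrac{1/p-1/q}{1/p-1/q'}$ one has $\theta\to p(1/p-1/q)>0$ while $\log(q'-1)\to\infty$, so $(q'-1)^{\theta d/2}\to\infty$; there is no cancellation of the type ``$\theta\sim c/\log q'$''. Even optimizing over all admissible $q'$ with the lower exponent held at $p$ gives only $\|h\|_q\le \exp\!\Big(\tfrac{d}{2}\,\tfrac{-\log(p-1)}{2/p-1}\,(1/p-1/q)\Big)\|h\|_p$, which is strictly worse than $e^{(1/p-1/q)d}$ for every $p<2$ (one checks $-\log(p-1)>2(2/p-1)$ on $(1,2)$). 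The paper's argument lets \emph{both} auxiliary exponents vary: one applies \eqref{eq:Lp-Lq-base} to a conjugate pair $(r,r')$ with $p\le r<2<r'$, obtaining $\|h\|_{r'}\le(r-1)^{-d/2}\|h\|_r$, and then uses convexity of $s\mapsto\log\|h\|_{1/s}$ to compare the secant slope over $[1/r',1/r]$ with that over $[1/q,1/p]$. Sending $r\to 2^-$ (so $r'\to 2^+$), the auxiliary slope tends to $-1$, which is exactly what yields $e^{(1/p-1/q)d}$. The point is that the optimum is attained in the limit where the auxiliary pair collapses to $(2,2)$, not where one endpoint escapes to infinity; keeping one endpoint pinned at $p$ prevents you from reaching it.
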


\begin{remark}
The constant in the first case clearly improves (by the factor of $2$ in the exponent) on the constant $e^{(2/p-2/q)d}$ obtained from real hypercontractivity. It can be checked that the constant in the second case improves on the constant $(p-1)^{-d/2}$ given by \eqref{eq:Lp-Lq-base}. The constants in the third and fourth cases are directly obtained from the complex and real hypercontractivity, respectively (we stated them for completeness). We also mention in passing that \eqref{eq:mainres} can be seen as a discrete-cube analogue of the classical Nikolskii type inequalities for polynomials (with the constant in the first case being of a similar form -- see for instance Theorem 2.6 in \cite{DeVore} and \cite{Nik}) .
\end{remark}

\begin{remark}
The constants in the third and fourth cases are asymptotically sharp as $d \to \infty$. Indeed, sharpness follows from the example of Hermite polynomials and the application of the central limit theorem. The asymptotics of $L_{p}$ norms of Hermite polynomials are computed in  \cite{Student}.  
\end{remark}

\begin{remark}
In the case $p=1$ and $q=2$, we obtain $C_{1,2,d} = e^{d/2}$. It is widely believed that the best possible $C_{1,2,d}$ should be $2^{d/2}$ (which is attained for $h(x) = (x_1+x_2)(x_3+x_4)\cdot\ldots\cdot(x_{2d-1}+x_{2d})$). For example, Pe\l czy\'nski's conjecture states that $C_{1,2,2} = 2$ (for chaoses with coefficients in arbitrary normed spaces, see \cite{Ole}). 
\end{remark}

\begin{remark}
It remains an open problem to determine the sharp values of the constants $C_{p,q,d}$ (even asymptotically, with $d \to \infty$, except for the case $2\leq p \leq q$, and $1\leq p\leq 2 \leq q$ with $\frac{1}{p}+\frac{1}{q}\leq 1$).
\end{remark}

\begin{remark}
Based on arguments from \cite{KW} (see Lemma 6.4.1), it is possible to extend the moment comparison from Theorem \ref{thm:moments-selfimp} to all polynomials of degree (at most) $d$ (that is, to not necessarily homogeneous polynomials). However, the constants we obtain this way are perhaps far from optimal. 
\end{remark}

\section*{Complex hypercontractivity and proof of Theorem \ref{thm:moments-base}}

For $x=(x_{1}, \ldots, x_{n})\in \{-1,1\}^{n}$ and $S\subseteq [n]:=\{1,2,\ldots, n\}$, we define the Walsh functions
$
w_{S}(x) = \prod_{j \in S}x_{j}.
$
When $S = \varnothing$, we set $w_{\varnothing}(x)=1$ for all $x \in \{-1,1\}^{n}$. These functions form an orthogonal basis $\{w_S, \ S \subset [n]\}$ in the space of all functions $f : \{-1,1\}^{n} \to \mathbb{C}$ and thus any such function has the Fourier--Walsh expansion 
\begin{align*}
f(x) = \sum_{S \in [n]} a_{S} w_{S}(x),
\end{align*}
where $a_{S} = \mathbb{E} f w_{S}$.  By $|S|$ we denote the cardinality of the set $S$. Take any $z \in \mathbb{C}$ and define the operator $T_{z}$ as follows 
\begin{align*}
T_{z} f(x) =  \sum_{S \in [n]} z^{|S|}a_{S} w_{S}(x).
\end{align*}
Real hypercontractivity tells us that for $1 < p < q$ and $z = \sqrt{\frac{q-1}{p-1}}$, the operator $T_z$ is a contraction from $L_p$ to $L_q$, that is $\|T_zf\|_q \leq \|f\|_p$ for all $f:\{-1,1\}^n\to \C$ (see for instance \cite{O'D}).

In what follows $q\geq 2\geq p \geq 1$. By the result of Weissler~\cite{WEISSLER} (see also Beckner~\cite{BECKNER} for dual exponents $p$ and $q$), for $t \in \mathbb{R}$, we have 
\begin{align}\label{wewe}
\|T_{it} f\|_{q}\leq \|f\|_{p} \quad \text{for all} \quad f:\{-1,1\}^{n} \to \mathbb{C}
\end{align}
if and only if 
\begin{align}\label{cc1}
|t| \leq \min\left\{\sqrt{p-1} , \frac{1}{\sqrt{q-1}} \right\}.
\end{align}
In particular, for any $d$-homogeneous polynomial $h:\{-1,1\}^{n} \to \mathbb{C}$, it yields
\begin{align*}
\min\left\{(p-1)^{d/2}, \frac{1}{(q-1)^{d/2}} \right\} \|h\|_{q}\leq \|h\|_{p}
\end{align*}
(because $T_zh = z^dh$), and this finishes the proof of Theorem~\ref{thm:moments-base}. 
\hfill$\square$

\begin{remark}
There is a conjecture of Weissler from \cite{WEISSLER} that for $z \in \mathbb{C}$, $|z|\leq 1$ we have 
\begin{align}\label{vei}
\|T_{z} f\|_{q}\leq \|f\|_{p} \quad \text{for all} \quad f:\{-1,1\}^{n} \to \mathbb{C}
\end{align}
if and only if 
\begin{align}\label{inf}
(q-2) (\Re\, wz)^{2} + |wz|^{2} \leq (p-2)(\Re w)^{2} + |w|^{2} \quad \text{for all} \quad w \in \mathbb{C}.
\end{align}
The conjecture is partially resolved, with the only case left open being $2<p<q<3$ and its dual, i.e., $3/2<p<q<2$. One cannot improve the bound in Theorem~\ref{thm:moments-base} and Theorem~\ref{thm:moments-selfimp} even if one uses (\ref{vei}) and (\ref{inf}) in its full generality instead of (\ref{wewe}) and  (\ref{cc1}), i.e., the particular case of (\ref{vei}), (\ref{inf}) when $z$ is purely imaginary. 
\end{remark}

\section*{Self improvement of \eqref{eq:Lp-Lq-base} and proof of Theorem \ref{thm:moments-selfimp}}

\begin{figure}[ht]
\centering
\includegraphics[scale=1]{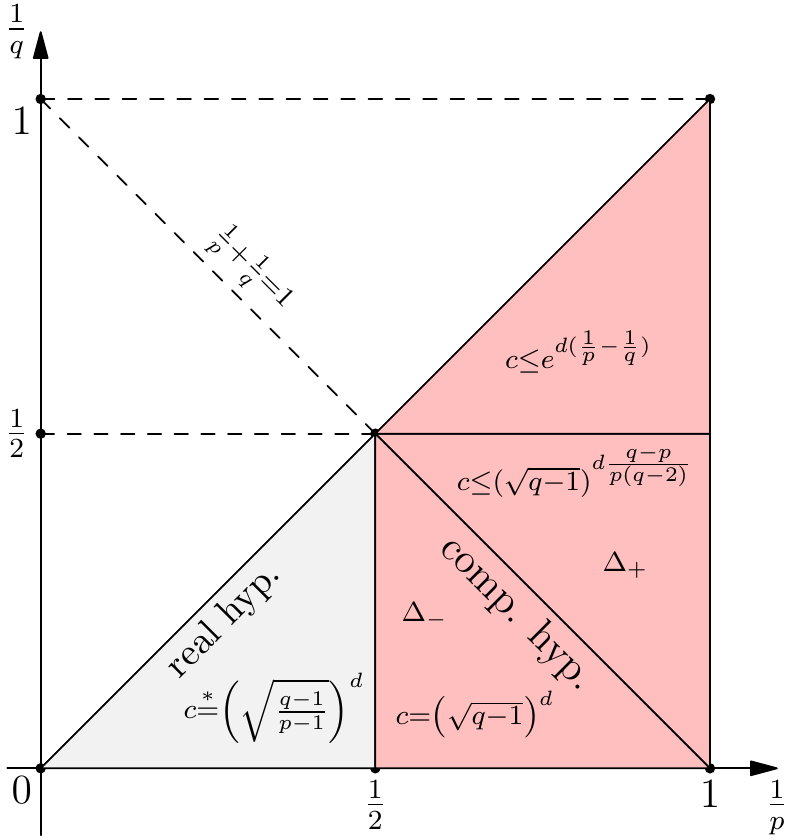}
\caption{Inequality $\|h\|_{q}\leq c \|h\|_{p}$ for $q\geq p \geq 1$;}

\label{fig:dom}
\end{figure}


Fix a $d$-homogeneous polynomial $h$ and consider the function $\psi(s) = \frac{1}{d}\log\|h\|_{1/s}$ on $(0,1]$, which is nonincreasing and convex (by H\"older's inequality). We set $s = \frac{1}{p}$ and $t = \frac{1}{q}$. 
Define the region $R_{s,t} = \{(x,y), \ 0 < y \leq x, x \leq s, y \leq t\}$. By convexity, the slopes of $\psi$ are nondecreasing, thus
\begin{equation}\label{eq:interpol}
\frac{1}{d}\frac{1}{1/q-1/p}\log\frac{\|h\|_q}{\|h\|_p} = \frac{\psi(t)-\psi(s)}{t-s} \geq \sup_{(x,y) \in R_{s,t}} \frac{\psi(y) - \psi(x)}{y-x}.
\end{equation}
Define regions where we can use \eqref{eq:Lp-Lq-base}: $\Delta_- = \{(x,y), \ 0 < y \leq \frac{1}{2} \leq x < 1, x + y \leq 1\}$ and $\Delta_+ = \{(x,y), \ 0 < y \leq \frac{1}{2} \leq x < 1, x + y > 1\}$. It follows from \eqref{eq:Lp-Lq-base} that,
\begin{align*}
\psi(y) - \psi(x) &\leq \frac{1}{2}\log(y^{-1}-1) & \text{on } \Delta_-,\\
\psi(y) - \psi(x) &\leq -\frac{1}{2}\log(x^{-1}-1) & \text{on } \Delta_+.
\end{align*}
Therefore,
\begin{equation}\label{eq:interpol2}
\sup_{R_{s,t}} \frac{\psi(y) - \psi(x)}{y-x} \geq \frac{1}{2}\max\left\{\sup_{R_{s,t}\cap \Delta_-} \frac{\log(y^{-1}-1)}{y-x}, \sup_{R_{s,t}\cap \Delta_+}\frac{-\log(x^{-1}-1)}{y-x}\right\}.
\end{equation}
To compute the right hand side, we shall need the following elementary fact.

\begin{lemma}\label{lm:monot}
For every $\frac{1}{2}\leq s \leq 1$, the function $\beta_s(u) = \frac{\log(u^{-1}-1)}{u-s}$ is increasing on $(0,s)$.
\end{lemma}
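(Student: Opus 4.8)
The plan is to prove the monotonicity by reducing it to the sign of a single-variable auxiliary function. Fix $\tfrac12 \le s \le 1$ and write $g(u) = \log(u^{-1}-1) = \log(1-u) - \log u$ on $(0,1)$, so that $\beta_s(u) = g(u)/(u-s)$. Differentiating, we get
\[
\beta_s'(u) = \frac{g'(u)(u-s) - g(u)}{(u-s)^2},
\]
so it suffices to show that the numerator $N(u) := g'(u)(u-s) - g(u)$ is positive for $u \in (0,s)$. Here $g'(u) = -\tfrac{1}{1-u} - \tfrac1u = -\tfrac{1}{u(1-u)}$, which is negative throughout $(0,1)$.

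First I would record the boundary behaviour of $N$. As $u \to 0^+$, $g(u) \to +\infty$ like $-\log u$ while $g'(u)(u-s) \to +\infty$ like $s/u$, which dominates; a short expansion shows $N(u) \to +\infty$. At the other end, $N(s) = -g(s) = \log(s/(1-s)) \ge 0$ since $s \ge \tfrac12$. So $N$ is positive at both ends of $(0,s)$. Next I would differentiate once more: $N'(u) = g''(u)(u-s) + g'(u) - g'(u) = g''(u)(u-s)$. Since $g''(u) = \tfrac{1}{(1-u)^2} - \tfrac{1}{u^2} = \tfrac{(2u-1)}{u^2(1-u)^2}$, on $(0,s)$ the factor $u - s$ is negative, so $N'(u)$ has the opposite sign to $2u-1$: thus $N$ is increasing on $(0,\tfrac12)$ and decreasing on $(\tfrac12, s)$. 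Hence on $(0,s)$ the function $N$ has no interior local minimum — its minimum over any closed subinterval is attained at an endpoint — and since $N > 0$ at both ends of $(0,s)$ (with the values $+\infty$ and $\log(s/(1-s)) \ge 0$, the latter being $0$ only in the degenerate case $s=\tfrac12$ where the interval is empty anyway), we conclude $N(u) > 0$ on $(0,s)$, and therefore $\beta_s'(u) > 0$ there.

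I expect the only delicate point to be the endpoint analysis at $u = s$: when $s = \tfrac12$ the interval $(0,s)$ still has $N(\tfrac12) = \log 1 = 0$, but $\tfrac12$ is the right endpoint, not an interior point, so $N > 0$ strictly on the open interval is still fine; and the sign analysis of $N'$ shows $N$ is strictly increasing on all of $(0,\tfrac12)$, giving strict positivity there. One should also double-check the limit $N(u)\to+\infty$ as $u\to 0^+$ carefully, but this is routine since the $s/u$ term beats the $-\log u$ term. No genuine obstacle remains; the argument is just the standard ``numerator of a quotient derivative, then its derivative collapses'' trick.
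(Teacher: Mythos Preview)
Your approach is the right one and is essentially the paper's---both reduce to the positivity of the numerator $N(u)=(u-s)^2\beta_s'(u)$ and take one more derivative---but you have a sign slip that breaks the argument. From $g'(u)=-\frac{1}{1-u}-\frac{1}{u}$ one gets
\[
g''(u)=-\frac{1}{(1-u)^2}+\frac{1}{u^2}=\frac{1-2u}{u^2(1-u)^2},
\]
the negative of what you wrote. Hence $N'(u)=g''(u)(u-s)$ has the opposite sign to what you claim: $N$ is \emph{decreasing} on $(0,\tfrac12)$ and \emph{increasing} on $(\tfrac12,s)$, so $u=\tfrac12$ is an interior \emph{minimum}, not a maximum. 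Your version is in fact internally inconsistent: an increasing function on $(0,\tfrac12)$ cannot satisfy $N(0^+)=+\infty$.

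With the corrected sign, the endpoint analysis is unnecessary; one simply evaluates
\[
N(\tfrac12)=g'(\tfrac12)\bigl(\tfrac12-s\bigr)-g(\tfrac12)=(-4)\bigl(\tfrac12-s\bigr)-0=4s-2\ge 0,
\]
with equality only when $s=\tfrac12$, in which case $\tfrac12$ is the right endpoint and $N>0$ on $(0,\tfrac12)$ because $N$ is strictly decreasing there. The paper packages the same computation by rewriting $N(u)>0$ as $s>u+u(1-u)\log(u^{-1}-1)$ and showing the right-hand side is increasing in $u$, so it suffices to check $u=s$; the content is identical.
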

\begin{proof}
We have, $(u-s)^2\beta_s'(u) =\frac{s-u}{u(1-u)} - \log(u^{-1}-1)$, which is positive for $u \in (0,s)$ if and only if $s > u + u(1-u)\log(u^{-1}-1)$. The derivative of the right hand side is $(1-2u)\log(u^{-1}-1)$, which is positive, so it suffices to check that $s > s + s(1-s)\log(s^{-1}-1)$, which is clearly true for every $\frac{1}{2} < s < 1$.
\end{proof}

\noindent
In particular, since the function $\beta_{1/2}(u) = \frac{\log(u^{-1}-1)}{u-1/2}$ satisfies $\beta_{1/2}(1-u) = \beta_{1/2}(u)$, it is symmetric about $u = \frac12$, it increases on $(0,\frac12)$ and it decreases on $(\frac12,1)$. Moreover, $\lim_{u\to \frac12}\beta_{1/2}(u) = -4$ and $\beta_u(1-u) = \frac12\beta_{1/2}(u)$.

\paragraph{\emph{Case 1.}} 
$1 \leq p \leq q \leq 2$, that is $\frac{1}{2}\leq t \leq s \leq 1$. We have,
\begin{align*}
\sup_{R_{s,t}\cap \Delta_-} \frac{\log(y^{-1}-1)}{y-x} = \sup_{\substack{\frac12 \leq x \leq s \\ y \leq 1-x}} \beta_x(y) = \sup_{\frac12 \leq x \leq s} \beta_x(1-x)&=  \sup_{\frac12 \leq x \leq s} \frac{1}{2}\beta_{1/2}(x) = -2.
\end{align*}
Using the evident monotonicity in $y$,
\[
\sup_{R_{s,t}\cap \Delta_+} \frac{-\log(x^{-1}-1)}{y-x} = \sup_{\substack{\frac12 \leq x \leq s \\ 1-x < y \leq \frac12}} \frac{-\log(x^{-1}-1)}{y-x} = \sup_{\frac12 \leq x \leq s} \frac12\beta_{1/2}(x) = -2.
\]
Therefore, by \eqref{eq:interpol2}, $\sup_{R_{s,t}} \frac{\psi(y) - \psi(x)}{y-x} \geq -1$, so \eqref{eq:interpol} yields
\[
\|h\|_q/\|h\|_p \leq \exp\left\{\left(\frac{1}{p}-\frac{1}{q}\right)d\right\}.
\]

\paragraph{\emph{Case 2.}}
$1 \leq p \leq 2 \leq q \text{ and } \frac{1}{p}+\frac{1}{q} > 1$, that is $t \leq \frac12 \leq s \leq 1$ and $s+t > 1$. We have,
\begin{align*}
\sup_{R_{s,t}\cap \Delta_-} \frac{\log(y^{-1}-1)}{y-x} = \sup_{\substack{\frac12 \leq x \leq s \\ y \leq 1-x, y \leq t}} \beta_x(y) = \max\left\{\sup_{\substack{1-t \leq x \leq s}} \beta_x(1-x), \sup_{\substack{\frac12 \leq x \leq 1-t}} \beta_x(t)\right\}.
\end{align*}
As before, 
\[
\sup_{\substack{1-t \leq x \leq s}} \beta_x(1-x) = \sup_{\substack{1-t \leq x \leq s}} \frac12\beta_{1/2}(x) = \frac12\beta_{1/2}(1-t) = \frac12\beta_{1/2}(t).\]
Moreover, by the evident monotonicity in $x$,
\[
\sup_{\substack{\frac12 \leq x \leq 1-t}} \beta_x(t) = \sup_{\substack{\frac12 \leq x \leq 1-t}} \frac{\log(t^{-1}-1)}{t-x} = \frac{\log(t^{-1}-1)}{2t-1} = \frac12\beta_{1/2}(t).
\]
Therefore,
$
\sup_{R_{s,t}\cap \Delta_-} \frac{\log(y^{-1}-1)}{y-x} = \frac{1}{2}\beta_{1/2}(t).
$
A similar computation shows that the supremum over the region $\Delta_+$ also gives $\frac{1}{2}\beta_{1/2}(t).$ Thus, $\sup_{R_{s,t}} \frac{\psi(y) - \psi(x)}{y-x} \geq \frac{1}{4}\beta_{1/2}(t) = \frac{\log(t^{-1}-1)}{2(2t-1)}$, so \eqref{eq:interpol} yields
\[
\|h\|_q/\|h\|_p \leq \exp\left\{\frac{\log(q-1)}{2(2/q-1)}\left(\frac{1}{q}-\frac{1}{p}\right)d\right\} = (q-1)^{\frac{q-p}{p(q-2)}\frac{d}{2}}.
\]

\paragraph{\emph{Case 3.}}
$1 \leq p \leq 2 \leq q \text{ and } \frac{1}{p}+\frac{1}{q} \leq 1$, that is $t \leq \frac12 \leq s \leq 1$ and $s+t \leq 1$. Here, $R_{s,t} \cap \Delta_+ = \varnothing$ and it can be checked as in the previous cases that the right hand side of \eqref{eq:interpol2} gives $\frac{1}{2}\frac{\log(t^{-1}-1)}{t-s}$, which does not improve on \eqref{eq:Lp-Lq-base}.

\paragraph{\emph{Case 4.}}
$2 < p \leq q$, that is $t \leq s < \frac{1}{2}$. Here, $R_{s,t} \cap \Delta_+ = R_{s,t} \cap \Delta_- = \varnothing$. By real hypercontractivity, $\sup_{(x,y) \in R_{s,t}} \frac{\psi(y) - \psi(x)}{y-x} \geq \sup_{(x,y) \in R_{s,t}} \frac{1}{2}\frac{\log(y^{-1}-1)-\log(x^{-1}-1)}{y-x}$ and, by convexity, this equals $\frac{1}{2}\frac{\log(t^{-1}-1)-\log(s^{-1}-1)}{t-s}$ (no self-improvement).
\hfill$\square$

\begin{remark}
Taking into account real hypercontractivity, a priori, the right hand side of \eqref{eq:interpol2} could have been replaced by
\[
\frac{1}{2}\max\left\{\sup_{R_{s,t}\cap \Delta_-} \frac{\log(y^{-1}-1)}{y-x}, \sup_{R_{s,t}\cap \Delta_+}\frac{-\log(x^{-1}-1)}{y-x}, \sup_{R_{s,t}\cap \Delta_0} \frac{\log(y^{-1}-1)-\log(x^{-1}-1))}{y-x}\right\},
\]
where $\Delta_0 = \{(x,y), \ 0 < y < x < \frac{1}{2}\} \cup \{(x,y), \ \frac12 < y < x < 1\}$.
It can be checked that in each Case 1 -- 3, this does not lead to further improvements (in other words, there is no loss in our argument being restricted to the regions $\Delta_{\pm}$).
\end{remark}

\section*{Acknowledgments}

We would like to thank R.~O'Donnell and K.~Oleszkiewicz for their comments regarding an early version of this manuscript.

This material is partially based upon work supported by the NSF under Grant No. 1440140, while the authors were in residence at the MSRI in Berkeley, California, during the fall semester of 2017.

\end{document}